\renewcommand{\setminus}{ -  }
\begin{document}

\title[Density of Hyperbolicity and Robust Chaos]{One-Parameter Families of Smooth 
Interval Maps: Density of Hyperbolicity and Robust Chaos}
\author{Sebastian van Strien}
\address{Mathematics Institute, University of Warwick, Coventry CV4 7AL,United Kingdom}
\email{strien@maths.warwick.ac.uk}
\date{\today}

\subjclass{Primary 37E05, 37Gxx, 37Dxx}

\begin{abstract}
In this note we will discuss the notion of robust chaos, and show that
(i) there are natural one-parameter families with robust chaos and (ii)
hyperbolicity is dense within generic one-parameter families (and
so these families are not robustly chaotic).
\end{abstract}

\maketitle

\section{Statement of Results}
In \cite{banerjee1998robust} the notion of {\em robust chaos} was introduced. 
A family of maps $\{f_t\}_{t\in [0,1]}$ is said to have  robust chaos
(or to be robustly chaotic) if there exists no parameter $t\in [0,1]$ for which the map $f_t$ has a 
periodic attractor. Examples of   families with robust chaos where given in that paper, but in these 
families the maps are non-smooth.
The authors conjectured that robust chaos does not occur within smooth families of intervals maps 
$f_t\colon [0,1]\to [0,1]$.
Contradicting this conjecture, in \cite{andrecut2001robust},  \cite{andrecut2001example}, \cite{lett2001example} 
\cite{elhadj2008robustness} and \cite{aguirregabiria2009robust},  examples
where given of families of smooth one-dimensional maps with robust chaos.
Since there is a huge literature on bifurcations of one-parameter families of dynamical systems
(starting perhaps with, for example,  \cite{MR699057}),
we shall clarify the situation in this note.

\begin{thm}[Robust unimodal families are \lq constant{\rq}]
\label{thm:unimodal}
 If $\{f_t\}$ is a smooth unimodal family with robust chaos, 
then all maps within this family are topologically conjugate.
\end{thm}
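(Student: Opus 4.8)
The plan is to reduce the statement to the constancy of the \emph{kneading sequence} and then to deduce that constancy from a soft topological observation. Write $c_t$ for the turning point of $f_t$, set $v_t=f_t(c_t)$ for the critical value, and let $K(f_t)\in\{L,R\}^{\mathbb{N}}$ be the kneading sequence, whose $n$-th symbol records whether $f_t^{\,n}(v_t)$ lies to the left or to the right of $c_t$. For a smooth unimodal family the maps $t\mapsto c_t$ and $t\mapsto f_t^{\,n}(v_t)$ are continuous. First I would record the elementary but decisive remark: if $f_t^{\,n}(v_t)=c_t$ for some $n\ge 0$, then $f_t^{\,n+1}(c_t)=c_t$, so $c_t$ is periodic; since $f_t'(c_t)=0$ this periodic orbit is superattracting, i.e.\ $f_t$ has a periodic attractor. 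Robust chaos therefore guarantees that for every $t$ and every $n$ one has $f_t^{\,n}(v_t)\neq c_t$; in particular no symbol $C$ ever occurs and $K(f_t)$ is a genuine point of the Cantor set $\{L,R\}^{\mathbb{N}}$.

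Next I would prove that $t\mapsto K(f_t)$ is continuous into $\{L,R\}^{\mathbb{N}}$ with its product topology. Fix $n$ and a parameter $t_*$. By the previous paragraph $f_{t_*}^{\,n}(v_{t_*})-c_{t_*}\neq 0$, and by continuity of the family this quantity keeps its sign for $t$ near $t_*$; hence the $n$-th symbol of $K(f_t)$ is locally constant. As this holds for every coordinate, $K(f_t)\to K(f_{t_*})$ coordinatewise, which is exactly convergence in the product topology. Now the domain $[0,1]$ is connected while the Cantor set is totally disconnected, so the continuous image $K([0,1])$ is a single point: the kneading sequence is \emph{constant} along the family. (By contrast, in the quadratic family this map fails to be continuous precisely at the superstable parameters, which is what allows the kneading sequence there to vary.)

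Finally I would upgrade equality of kneading sequences to topological conjugacy. Here the essential input is the absence of wandering intervals for smooth unimodal maps with a non-flat critical point (Martens--de~Melo--van~Strien); together with the hypothesis that no $f_t$ has a periodic attractor, this makes the itinerary map a genuine semiconjugacy with no interval collapses, so two maps in the family with the same kneading sequence are topologically conjugate by the classical kneading-theory argument. Combining the three steps gives the theorem. The main obstacle is this last step: one must ensure that equal combinatorics forces topological (not merely combinatorial) equivalence, and it is exactly the no-wandering-intervals theorem---the deepest ingredient---that removes the only obstruction, whether the common combinatorics is non-renormalizable, finitely, or infinitely renormalizable. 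The earlier steps are robust and use only continuity of the family together with the trivial observation that a periodic turning point is a periodic attractor.
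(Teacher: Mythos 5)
Your proposal is correct and follows essentially the same route as the paper: the kneading invariant cannot change because a periodic turning point would be a superattracting periodic orbit (excluded by robust chaos), and equal kneading invariants plus the absence of wandering intervals and of periodic attractors yields topological conjugacy. Your intermediate continuity-into-the-Cantor-set and connectedness argument is just a careful spelling-out of the paper's one-line claim that the itinerary of the critical point can only change at parameters where $f_t^n(c_t)=c_t$.
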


So the family of robustly chaotic unimodal maps given in the papers cited above
are all topologically conjugate to each other. That the family is robustly chaotic is therefore not surprising!
For multimodal families this need not be the case:

\begin{thm}[A family of cubic maps with robust chaos]
 \label{thm:example}There exists a one-parameter family $\{f_t\}$ of smooth multimodal interval maps
which is robustly chaotic.
\end{thm}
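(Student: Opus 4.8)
The plan is to build the family \emph{around} a fixed full unimodal map, using the extra turning point as the only source of parameter dependence. First I would fix an interval $J\subset[0,1]$ and a smooth full unimodal map $U\colon J\to J$, for instance one conjugate to $x\mapsto 1-2x^2$, whose unique turning point is eventually mapped onto a repelling fixed point; such a map is exact, all of its periodic orbits are repelling, and it has no periodic attractor. I would then construct a one-parameter family $\{f_t\}$ of smooth maps of $[0,1]$ with negative Schwarzian derivative and exactly two turning points $c_1,c_2$, arranged so that: (a) $J$ is forward invariant and $f_t|_J=U$ for every $t$, with $c_1\in J$ the turning point of $U$; (b) the second turning point $c_2$ lies outside $J$, but its critical value $f_t(c_2)$ lies in the interior of $J$; and (c) the complement $[0,1]\setminus J$ carries no recurrent dynamics, i.e.\ every point is eventually mapped into $J$. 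The single free parameter $t$ would move the critical value $f_t(c_2)$ across $J$.

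Granting such a family, robustness of the chaos would follow from Singer's theorem together with the no-attractor property of $U$. Since $f_t$ has negative Schwarzian derivative, the immediate basin of any attracting or one-sided attracting periodic orbit contains a turning point or a boundary point of $[0,1]$. The boundary points can be arranged to be repelling (or pre-fixed to a repeller), the turning point $c_1$ is pre-periodic to a repeller inside $J$, and the forward orbit of $c_2$ enters $J$ by (b) and is thereafter governed by $U$, which has no periodic attractor; hence none of the distinguished points is attracted to a periodic orbit, and $f_t$ has no periodic attractor. Crucially, the condition $f_t(c_2)\in\operatorname{int}J$ is open, so this persists for $t$ ranging over a genuine interval: the family is robustly chaotic. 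That the family is non-constant, in contrast to Theorem~\ref{thm:unimodal}, would come from observing that, as $f_t(c_2)$ sweeps through $J$, the itinerary of $c_2$ under $U$, and hence the kneading data of $f_t$, changes; choosing parameters with distinct kneading sequences yields maps that are not topologically conjugate.

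I expect the main obstacle to be the explicit realization satisfying (c): with only two turning points there is real tension between making $J$ a full unimodal (hence one–turning–point) core and forcing the complementary region to fall into $J$ without either stranding an invariant subsystem outside $J$ or introducing a third critical point. The smooth matching of $f_t$ to $U$ along $\partial J$, the maintenance of negative Schwarzian derivative, and the verification that $[0,1]\setminus J$ is genuinely transient throughout the whole open parameter interval are the delicate points; this is precisely where the multimodal setting supplies the extra freedom that Theorem~\ref{thm:unimodal} shows is unavailable for unimodal families. A secondary task is to confirm that the kneading data really varies, so that the example is not vacuously \emph{constant}: this amounts to checking that distinct placements of $f_t(c_2)$ in $J$ produce distinct itineraries under the exact map $U$.
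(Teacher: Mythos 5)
Your dynamical mechanism --- a forward-invariant region containing exactly one turning point, that turning point eventually mapped onto a repelling fixed point, Singer's theorem to force any periodic attractor's immediate basin to contain a turning point or a boundary point, and the other turning point's critical value swept across the trapping region to make the kneading data non-constant --- is exactly the mechanism of the paper's proof, and you even handle the boundary case of Singer's theorem explicitly. The gap is that the theorem asserts the \emph{existence} of such a family, while your argument only establishes the conditional statement ``if a family with properties (a)--(c) exists, then it is robustly chaotic''; the construction is deferred, and you yourself flag it as the main obstacle. Moreover, two of your requirements are responsible for most of that difficulty and are not actually needed. You do not need (a), that $f_t|_J=U$ is literally frozen in $t$: it suffices that for every $t$ the turning point inside the trapping region is eventually mapped onto a repelling fixed point, which is one analytic equation on the family rather than an infinite-codimension rigidity condition. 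And you do not need (c), that every point outside $J$ falls into $J$: Singer's theorem only obliges you to track the turning points and the boundary, so a repelling invariant subsystem outside $J$ (for instance a repelling fixed endpoint) is harmless.

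The paper closes the gap with an explicit algebraic realization under which all of your ``delicate points'' disappear: take cubics $f(x)=ax+bx^2+(1-a-b)x^3$ fixing $0$ and $1$, with critical points $c_1<c_2$ and the critical orbit ordered as $0<c_1<f(c_2)<f^3(c_2)=f^4(c_2)<c_2<f^2(c_2)<f(c_1)<1$. The single condition $f^4(c_2)=f^3(c_2)$ cuts an analytic curve out of the $(a,b)$-plane, which is the one-parameter family; negative Schwarzian is automatic because a real polynomial with only real critical points has negative Schwarzian, so no smooth matching of pieces is required; $[f(c_2),1]$ is the trapping region, contains only the critical point $c_2$, and absorbs $f(c_1)$; and $f(c_1)$ varies along the curve, so the kneading invariant is non-constant. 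If you prefer to salvage your gluing construction, weaken (a) and (c) as above; as stated, keeping $f_t|_J$ independent of $t$ while matching smoothly at $\partial J$ and preserving negative Schwarzian is a genuine, and avoidable, obstruction.
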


On the other hand, the above example is special: generic one-parameter families are never robustly chaotic.
In fact, hyperbolicity is dense within such families:

\begin{thm}[Hyperbolicity is dense within generic families, and so only exceptional families are robustly chaotic]
\label{thm:exrobust}
Near any one-parameter family of smooth interval maps there exists
a one-parameter family $\{f_t\}$ of smooth intervals maps for which
\begin{itemize}
\item the number of critical points of each of the maps $f_t$ is bounded;
\item the set of parameters $t$ for which  \begin{itemize}
\item {\em all} critical points of $f_t$ are in basins of periodic attractors (such a map $f_t$ is called hyperbolic), 
\item critical points of $f_t$ are not eventually mapped onto other critical points (such a map $f_t$ is said to satisfy the no-cycle condition),
\end{itemize}
is open and dense.
\end{itemize}

\end{thm}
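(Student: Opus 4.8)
The plan is to deduce the theorem from the density-of-hyperbolicity theorem for \emph{individual} smooth interval maps (the resolution of the real Fatou conjecture of Kozlovski--Shen--van Strien), which asserts that any $C^\infty$ map of $[0,1]$ can be approximated arbitrarily well in $C^\infty$ by a hyperbolic map whose critical points are nondegenerate and whose number of critical points is no larger. First I would dispose of the first bullet, the uniform bound on the number of critical points: approximate the given family $\{g_t\}$ in $C^\infty([0,1]\times[0,1])$ by a family that is polynomial in $x$ of some fixed degree $d$ with coefficients depending smoothly on $t$; every member then has at most $d-1$ critical points. Henceforth I work inside the space $\mathcal F$ of such families with the $C^\infty$ topology, which is a Baire space, and it suffices to show that the families for which the \emph{good} parameter set is open and dense form a residual (hence dense) subset of $\mathcal F$; since $\mathcal F$ is dense among all smooth families, this yields the assertion ``near any family there exists''. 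Here I call $t$ good if $f_t$ is hyperbolic and satisfies the no-cycle condition.

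Next I would establish that for every family in $\mathcal F$ the set $G$ of good parameters is open. Openness of hyperbolicity is the robust part: if at $t_0$ every critical point lies in the immediate basin of a hyperbolic periodic attractor, then the attractors persist and move continuously for nearby $t$ by the implicit function theorem, the finitely many critical points move continuously, and each remains trapped in the corresponding basin; hence $f_t$ is hyperbolic for $t$ near $t_0$. For the no-cycle condition I would first arrange, by an additional arbitrarily small perturbation, the genericity that each attracting cycle contains at most one critical point and that every critical orbit is either periodic or converges to a cycle containing no critical point. Under this genericity the relations $f_t^k(c_i(t))=c_j(t)$ are violated for all large $k$ uniformly near $t_0$, and only finitely many remaining relations have to be excluded, each an open condition; so $G$ is open.

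The heart of the argument is density, for which I would run a Baire--category scheme. For each $n\in\mathbb N$ let $U_n\subset\mathcal F$ consist of those families for which $G$ is \emph{robustly} $1/n$-dense, meaning every point of $[0,1]$ lies within $1/n$ of an interval of good parameters that persists under small perturbations of the family. Openness of $U_n$ follows from the openness of $G$ together with the persistence of hyperbolicity under perturbations of the whole family. To prove $U_n$ dense, take any family and use compactness of $[0,1]$ to choose finitely many parameters $s_1,\dots,s_m$ that are $1/n$-dense. At each $s_j$ I would apply the single-map density theorem to replace $f_{s_j}$ by a nearby good map, realizing this as a perturbation of the family localized in $t$ (multiply the map-perturbation by a bump function in the parameter centred at $s_j$); by the openness established above this produces a whole interval of good parameters around $s_j$. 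Since there are only finitely many $s_j$, I perform the perturbations one at a time, each so small that it does not destroy the good intervals already created, and the result lies in $U_n$. Then $\bigcap_n U_n$ is residual, and any family in it has $G$ dense; combined with the openness of $G$, this makes $G$ open and dense.

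The main obstacle is entirely contained in the single-map input: the density-of-hyperbolicity theorem is deep, relying on complex bounds, rigidity and quasiconformal surgery, and it is what lets me hyperbolize $f_{s_j}$ without increasing the number of critical points. Given that input, the remaining difficulties are soft: the uniform bound on critical points is handled by passing to polynomial families, the openness of the no-cycle condition by the genericity assumptions above, and---crucially---the compactness of the parameter interval reduces the construction at each scale $1/n$ to \emph{finitely many} localized perturbations, so no delicate convergence of an infinite perturbation sequence is needed. The use of parameter bump functions is the one place where smoothness rather than analyticity is essential; since the theorem concerns smooth families this causes no trouble.
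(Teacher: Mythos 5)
Your argument is sound in outline and rests on the same two pillars as the paper's proof --- reduction to polynomial families of fixed degree $d$ to bound the number of critical points, and the Kozlovski--Shen--van Strien density-of-hyperbolicity theorem as the single deep input --- but the way you globalize the single-map statement over the parameter interval is genuinely different. The paper identifies the space of degree-$d$ polynomial interval maps with $\R^n$, so that the family becomes a curve $c\colon[0,1]\to\R^n$ and the hyperbolic no-cycle maps an open dense subset $X\subset\R^n$; the perturbation is then a single translation $c(t)+\alpha$ by a constant vector, and a short Baire-category argument in the \emph{finite-dimensional} space of translations (with $A_\delta$ the set of $\alpha$ for which the good parameter set is $\delta$-dense) produces a dense set of admissible $\alpha$. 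You instead run the Baire scheme in the infinite-dimensional space of families, perturbing locally in $t$ with bump functions at finitely many $1/n$-dense parameters and invoking the single-map theorem at each one. Both schemes work: the paper's buys simplicity (one perturbation, a purely point-set lemma about a curve and an open dense set in $\R^n$, no bump functions, so the perturbed family remains polynomial --- indeed analytic --- in $t$), while yours is more flexible (it never needs the ambient perturbation space to be finite-dimensional) at the cost of requiring genuinely smooth, non-analytic dependence on $t$, as you yourself note. One point you should tighten, which the paper also treats loosely: the set of hyperbolic maps satisfying the no-cycle condition is \emph{not} open as it stands, since a critical orbit can accumulate on a superattracting cycle through another critical point and the relation $f^k(c_i)=c_j$ can then be created by an arbitrarily small perturbation. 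Your genericity conditions are exactly the right repair, but they must be built into the definition of ``good'' from the outset, so that goodness is an open condition jointly in $(t,f)$; as written, your claim that $G$ is open for \emph{every} family in $\mathcal F$ is false, and openness of $G$ for the limiting family in $\bigcap_n U_n$ would otherwise not be guaranteed.
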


In particular, a generic family 
$\{f_t\}_{t\in [0,1]}$  is {\em not} robustly chaotic. We should also point out the following two facts , see \cite{MvS_book}:
maps $f_t$ which are hyperbolic and satisfy the no-cycle condition, 
are (i) structurally stable and (ii) Lebesgue almost every $x$ is in the basin of a hyperbolic periodic attractor of $f_t$.

\section{The Proofs}

Let us start with the proof of Theorem~\ref{thm:unimodal}. Take a robustly chaotic  family $\{f_t\}$ 
of unimodal maps $f_t\colon [0,1]\to [0,1]$.
The itinerary of the critical point $c_t$ of $f_t$ can only change as $t$ varies, 
if $f^n_t(c_t)=c_t$ for some $n$. But since $\{f_t\}$ is robustly chaotic, this does not happen.
So $f_t$ has the same kneading invariant for each $t\in [0,1]$.
Since $f_t$ has no periodic attractors at all, it follows from the non-existence of wandering intervals
(see Chapter IV of \cite{MvS_book}) that $f_{t'}$ and $f_{t}$ are topologically conjugate for
all $t,t'\in [0,1]$.

Let us now prove  Theorem~\ref{thm:example} and show that there exists a family of cubic maps with robust chaos and which
does not have constant kneading invariant. Consider polynomials $f\colon [0,1]\to [0,1]$
of degree three, so that $f(0)=0$, $f(1)=1$ (which implies that $f(x)=ax+bx^2+(1-a-b)x^3$)
and with two critical points $0<c_1<c_2<1$ so that
$0<c_1<f(c_2)<f^3(c_2)=f^4(c_2)<c_2<f^2(c_2)<f(c_1)<1$.
The set of such polynomials corresponds
to a real analytic curve in the $(a,b)$ plane (defined by the condition that $f^4(c_2)=f^3(c_2)$).
Hence it contains a one-parameter family of maps $\{f_t\}_{t\in [0,1]}$. Since $f_t$ is a polynomial with 
only real critical points, it  has negative Schwarzian (see \cite[Exercise IV.1.7]{MvS_book}).
Hence by Singer's result, each of  its periodic attractors has a critical point
in its immediate basin. Since $[f(c_2),1]$ is mapped into itself, and $f(c_1)\in [f(c_2),1]$
any periodic attractor of $f_t$ would have to lie in $[f(c_2),1]$. Since $c_2$ is the only
critical point in $[f(c_1),1]$, it follows that if $f_t$ has a periodic attractor, then
$c_2$ would have to be in its basin.  But since $f^4(c_2)=f^3(c_2)$ is a repelling fixed point, this does not happen.
It follows that these maps define a one-parameter family $\{f_t\}$ of smooth bimodal maps
which are robustly chaotic. Since $f_t(c_1)$ can vary with $t$ (to be anywhere within the interval $[f_t)^2(c_2),1]$), the kneading invariant of $f_t$ is not
constant.
Note that the example is based on the map having a trapping region. 

Let us finally prove Theorem~\ref{thm:exrobust}. Take a one-parameter $\{f_t\}_{t\in [0,1]}$
family of real polynomial interval maps of degree $d$. 
By taking $d$ large enough, we can take this family arbitrarily close  to the original family of interval maps
(in any topology). Let $P$ be the space of  all real polynomial interval maps
of degree $d$.
By \cite{MR2335796} (which is based on \cite{MR2342693}) each 
map $g\in P$ can be approximated by a map $\hat g$ for which all critical points
are in basins of periodic attractors. Hence we can identify
$P$ with $\R^n$, $\{f_t\}$ with a curve $c\colon [0,1]\to \R^n$
and the set of maps in $P$ for which all critical points
are in basins of periodic attractors with an open and dense subset $X$ of $\R^n$.
Maps which fail the no-cycle condition correspond to maps for which an iterate
of a critical point lands on another critical point; the corresponding parameters lie
on analytic codimension-one varieties. So we can and will assume that $X$ corresponds
to hyperbolic maps for which the no-cycle conditions holds.
Hence Theorem~\ref{thm:exrobust} follows from

\begin{lem}
Let $c\colon [0,1]\to \R^n$ be a curve, and let $X$ be an open and dense subset of $\R^n$.
Then there exist a set $A\subset \R^n$ which is dense (in fact of 2nd Baire category)
so that for each  $\alpha\in A$,
$F_\alpha:=\{t\in [0,1]; c(t)+\alpha\in X\}$ is open and dense.
\end{lem}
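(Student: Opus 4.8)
The plan is to argue by complementation and Baire category in $\R^n$. The first observation is that the openness half of the conclusion is free: since $c$ is continuous and $X$ is open, for every $\alpha$ the set $F_\alpha = c^{-1}(X-\alpha)$ is automatically open in $[0,1]$. So the entire content of the lemma is to produce a large set $A$ of translation vectors for which $F_\alpha$ is moreover \emph{dense}. Writing $Y := \R^n \setminus X$ for the complement of $X$ — which is closed and nowhere dense — I would identify the \emph{bad} parameters (those $\alpha$ for which $F_\alpha$ fails to be dense) and show that they form a meager set.

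Concretely, $F_\alpha$ fails to be dense in $[0,1]$ precisely when some nonempty open subinterval is disjoint from it, and after shrinking we may take this interval from a fixed countable basis $\mathcal{B}$ of open subintervals with rational endpoints. For $I \in \mathcal{B}$ set
$$B_I := \bigcap_{t\in I}\bigl(Y - c(t)\bigr) = \{\alpha \in \R^n \,;\, c(t)+\alpha \in Y \ \forall\, t \in I\}.$$
Then the set of bad $\alpha$ is exactly $B = \bigcup_{I \in \mathcal{B}} B_I$, a \emph{countable} union, and I would simply take $A := \R^n \setminus B$; by construction $F_\alpha$ is open and dense for every $\alpha \in A$.

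It remains to show $B$ is meager, for which it suffices to prove each $B_I$ is nowhere dense. Each $B_I$ is closed, being an intersection of the closed sets $Y - c(t)$, so I only need that $B_I$ has empty interior. This is the key step, and it is almost immediate: if $B_I$ contained a nonempty open ball $V$, then fixing any single $t_0 \in I$ we would get $c(t_0) + V \subset Y$, exhibiting a nonempty open subset of $Y$ and contradicting the fact that $Y$ is nowhere dense. Hence each $B_I$ is closed with empty interior, so $B$ is a countable union of nowhere dense sets, i.e.\ meager. By the Baire category theorem in $\R^n$, the residual set $A = \R^n \setminus B$ is dense and of second Baire category, which is exactly what is claimed.

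I expect the argument to be this short. The only points requiring any care are the reduction to a \emph{countable} union via the rational-endpoint basis $\mathcal{B}$ (so that meagerness genuinely follows rather than merely a pointwise statement), and the elementary ``fattening'' observation that translating at a single parameter $t_0$ already forces an open set into $Y$. Notably, no uniformity in $t$ across the whole interval is needed, which is precisely what keeps the empty-interior argument trivial.
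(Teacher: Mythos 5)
Your proof is correct and complete, and it reaches the conclusion by a related but noticeably more streamlined route than the paper. Both arguments are Baire-category arguments in the translation parameter $\alpha\in\R^n$, and both ultimately rest on the single observation that each translate $Y-c(t)$ of the complement $Y$ of $X$ is closed and nowhere dense. The difference is in the decomposition. The paper stratifies the \emph{good} set by scale: it defines $A_\delta$ as the set of $\alpha$ for which $F_\alpha$ is $\delta$-dense, and proving each $A_\delta$ dense requires sampling at the finitely many points $k/n$ with $n>1/\delta$, a pigeonhole step (``one of the $U_k$ is somewhere dense''), and an argument by contradiction, after which $A=\bigcap_{\delta>0}A_\delta$. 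You instead stratify the \emph{bad} set by the rational basis interval $I$ witnessing failure of density, writing it as the countable union of the sets $B_I=\bigcap_{t\in I}\bigl(Y-c(t)\bigr)$; each $B_I$ is visibly closed, its nowhere-density follows from a \emph{single} $t_0\in I$ (an alleged interior ball, translated by $c(t_0)$, would put an open set inside the nowhere dense set $Y$), and so no compactness, pigeonhole, or contradiction is needed. Your version also delivers directly that $A$ is residual, which is precisely the ``second Baire category'' clause of the statement. The only point to phrase with a little care is that $\mathcal{B}$ should be a countable basis for the \emph{relative} topology of $[0,1]$, so that relatively open neighbourhoods of the endpoints $0$ and $1$ also contain a basis element; with that convention your reduction ``$F_\alpha$ fails to be dense if and only if it misses some $I\in\mathcal{B}$'' is exactly right.
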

\begin{proof}
Since the curve $c$ is continuous and $X$ is open,
$F_\alpha$ is open for each $\alpha\in \R^n$. To prove that $F_\alpha$
is dense,  take $\delta>0$ and define the set
$A_\delta$ of $\alpha\in \R^n$ so that for each
$t\in [0,1]$ there exists $t'$ with $|t-t'|<\delta$ and so that
$t'\in F_\alpha$. 

Let us show that $A_\delta$ is dense. 
Assume by contradiction it is not dense. Then
there exists an open set $U$ of $\alpha\in \R^n$
for which there exists $t_\alpha \in [0,1]$ so that
for each $t\in [0,1]$ with $|t-t_\alpha|<\delta$ one
has $t\notin F_{\alpha}$. So if we take $n>1/\delta$ then for each
$\alpha\in U$ there exists $k\in \{0,1,\dots,n\}$ so that
$k/n \notin F_\alpha$, i.e. $c(k/n)+\alpha\notin X$.
Let $U_k$ be the set of $\alpha\in U$ so that
$c(k/n)+\alpha\notin X$. 
Note that $U_0\cup \dots \cup U_n=U$. It follows that the closure of at least
one of the sets $U_{k_0}$ contains an open set (otherwise $U\setminus \overline U_i$
is dense in $U$ for each $i$, 
and so $\bigcap_{i=0,\dots,n} (U\setminus \overline U_i)=U\setminus \bigcup_{i=0,\dots,n} \overline U_i
$ is dense in $U$, a contradiction).  It follows that there exists a subset $U_{k_0}'\subset U_{k_0}$ 
so that $\overline{U_{k_0}'}$ contains an open set. Since  $U_{k_0}'\subset U_{k_0}$,
for each $\alpha\in U_{k_0}'$ 
one has $c(k_0/n)+\alpha\notin X$. But since $X$ is open then for each $\alpha\in \overline{U_{k_0}'}$
one has  $c(k_0/n)+\alpha\notin X$.
But this contradicts the assumption that $X$ is open and dense.
Thus we have shown that $A_\delta$ is dense for each $\delta>0$.

Since $A_\delta$ is also open, it follows by the Baire property
that $A:=\cap_{\delta>0}A_\delta$ is dense.
By construction, for each $\alpha\in A$, we have that
$F_\alpha$ is dense. 
\end{proof}

\tiny 
\bibliographystyle{alpha}
\bibliography{../onedim_bib}

\end{document}